\newcommand{\cI}{\mathcal I}
\newcommand{\cM}{\mathcal M}
\newcommand{\higho}{$^{\text{o}}$}
\newtheorem{thm}{Theorem}
\newtheorem{lem}{Lemma}
\author{Franz Lemmermeyer}
\title{Euler, Goldbach, and ``Fermat's Theorem''}
\begin{document}

\begin{abstract}
While preparing the correspondence between Leonhard Euler and
Christian Goldbach for publication, Martin Mattm\"uller asked
whether the lemma given in the postscript of Euler's letter 
dated July 26, 1749, was enough for completing the proof of 
Fermat's Four Squares Theorem. In this article we will show 
that Euler's result can in fact be used for proving this result 
via induction.
\end{abstract}

\maketitle
\markboth{Euler, Goldbach, and Sums of Squares}
         {\today \hfil Franz Lemmermeyer}

\section*{Introduction}

The correspondence between Leonhard Euler and Christian Goldbach is 
a rich source for studying the development of Euler's work in number 
theory. It was first published by P.H. Fu\ss{} \cite{Fuss} in 1843, 
and then again by A.P. Jushkevich and E. Winter \cite{EG65} in 1965. 
The correspondence, both in the original mixture of Latin and German, 
as well as in an English translation, is scheduled to appear as vol. 4 of 
Series IV-A of Euler's Opera Omnia \cite{EuGo} at the end of 2011.

Many letters between Euler and Goldbach deal with various 
number theoretic problems first posed (and sometimes solved) by 
Pierre Fermat. Here we discuss his results on sums of two and 
four squares. As early as September 1636, Fermat stated the 
Polygonal-Number Theorem in a letter to Mersenne: every positive 
integer is the sum of (at most) three triangular numbers, four 
squares, five pentagonal numbers etc.:
\begin{quote}
1. Every number is the sum of one, two or three triangular numbers, 
\begin{align*}
& \text{one}, 2, 3, 4           & \ldots & & \text{squares}, \\
& \text{one}, 2, 3, 4, 5        & \ldots & & \text{pentagonal numbers}, \\
& \text{one}, 2, 3, 4, 5, 6     & \ldots & & \text{hexagonal numbers}, \\
& \text{one}, 2, 3, 4, 5, 6, 7  & \ldots & & \text{heptagonal numbers}, 
\end{align*}
and so on until infinity.

It seems that Diophantus\footnote{The passage in Diophantus which 
Fermat is referring to is problem 31 in Book IV; for Fermat's 
comments on this problem, see \cite[I, p. 305]{Ferm}. In Heath's 
edition \cite[p. 188]{Heath}, this is problem 29 in Book IV.} 
assumed the second part of the theorem, and 
Bachet tried to verify it empirically, but did not attain a demonstration.
\end{quote}
Fermat then continues
\begin{quote}
2. The eightfold multiple of an arbitrary
number, diminished by $1$, is composed of four squares -- not only
in integers -- which perhaps others might have already seen --  but 
also in fractions, as I promise to prove.
\end{quote}
The point Fermat is trying to make is that primes of the form
$8n-1$ cannot be written as a sum of less than four {\em rational} 
squares. A brief summary of the most important letters concerning
sums of squares is given in the following table:

\begin{table}[h!]
\begin{tabular}{|p{1.7cm}|p{1.8cm}|p{8cm}|} \hline 
   date & written to & content \\ \hline 
  15.07.1636 & Mersenne & A number $n$ is a sum of 
                           exactly three integral squares
                           if and only if $a^2n$ is \\
  02.09.1636 & Mersenne & A number is a sum of three integral
                          squares if and only if it is a sum
                          of three rational squares. \\
  16.09.1636 & Roberval & If $a$ and $b$ are rational,
                          and if $a^2 + b^2 = 2(a+b)x+x^2$,
                          then $x$ and $x^2$ are irrational. \\
  Sept. 1636 & Mersenne & F. asks for solutions of $x^4 + y^4 = z^4$ and 
                          $x^3 + y^3 = z^3$, and states
                          the polygonal number theorem. He claims that 
                          every integer $8n-1$ is the sum of four 
                          squares, but not of three; both in 
                          integers and fractions. \\  
  May 1640   & Mersenne & Fermat repeats the problems he
                          communicated in Sept. 1636 \\
  Dec. 1640  & Mersenne & Fermat states Two-Squares Theorem \\
  June 1658  & Digby    & Fermat claims proof of the 
                          Two Squares Theorem. \\
  Aug. 1659  & Carcavi  & Fermat claims proof of the 
                          Four Squares Theorem. \\
\hline
\end{tabular} \smallskip
\end{table} 
In addition we remark that in a letter to Descartes dated March 22, 1638, 
Mersenne reports that Fermat is able to prove that no number of the form 
$4n-1$ is a sum of two integral or rational squares.

\section{The Four-Squares Theorem in the Euler-Goldbach Correspondence}

\begin{table}[h!]
\begin{tabular}{|r|p{2.5cm}|p{8.5cm}|} \hline 
   \# & letter & content \\ \hline 
  2 & Dec. 1, 1729 & Goldbach asks whether Euler knows Fermat's
                     claim that all numbers $2^{2^n}+1$ are prime. \\
  3 & Jan. 8, 1730 & Euler is unable to do anything with 
                     Fermat's problem. \\
  4 & May 22, 1730 & Goldbach explains how to compute with remainders. \\
  5 & June 4, 1730 & Euler observes that $2^{n}+1$ is composite of
                     $n$ has an odd prime divisor. 

                     ``Lately, reading Fermat's works, I came upon
                    another rather elegant theorem stating that any 
                    number is the sum of four squares, or that for 
                    any number four square numbers can be found whose 
                    sum is equal to the given number''.  \\
  6 & June 26, 1730 & Goldbach has not read Fermat's works. \\
  7 & June 25, 1730 & Euler observes that $10^4+1$ is divisible by $37$,
                     and that $3^8+2^8$ is divisible by $17$. Euler 
                     cannot prove that any number is the sum of four 
                     squares. He has found another result by Fermat,
                     namely that $1$ is the only triangular number that
                     is a fourth power (Several years earlier, Goldbach 
                     had sent an erroneous proof of this claim to 
                     D. Bernoulli). \\
  8 & July 31, 1730 & Goldbach proves that Fermat numbers are pairwise
                     coprime. He claims that $1$ is the only square 
                     among the triangular numbers. \\
  9 & Aug. 10, 1730 & Euler mentions that Fermat and Wallis studied
                     the equation $ap^2 + 1 = q^2$, and mentions a 
                     method for solving it which he credits to Pell. \\
 10 & Oct.  9, 1730 & Goldbach studies sums of three and four squares. \\
 11 & Oct. 17, 1730 & Euler mentions another theorem by Fermat:
                     ``any number is the sum of three triangular 
                       numbers''. \\
 15 & Nov. 25, 1730 & By studying prime divisors of numbers $2^p-1$,
                      Euler discovered ``Fermat's Little Theorem''. \\
 40 & Sept. 9, 1741 & Euler studies prime divisors of $x^2 + y^2$,
                       $x^2 - 2y^2$, and $x^2 - 3y^2$.  \\
 47 & March 6, 1742 & Euler proves ``a theorem of Fermat's'' according
                   to which primes $p = 4n+3$ cannot divide a sum of two 
                   squares $a^2 + b^2$ except when both $a$ and $b$ are 
                   divisible by $p$. \\
 52 & June 30, 1742 &  Euler claims that prime numbers $4n+1$ are
                   represented uniquely as a sum of two squares. 
                   He also mentions that $641$ divides $2^{32}+1$,
                   thereby disproving Fermat's claim that all numbers
                   $2^{2^n}+1$ are prime.  \\
 56 & Oct. 27, 1742 &  Euler has written to Clairaut, asking 
                   him ``whether Fermat's manuscripts might still 
                   be found''. \\
 72 & Aug. 24, 1743 &  Euler sketches the idea of infinite descent. \\
 73 & Sept. 28, 1743 & Goldbach, with considerable help by Euler, 
                   gives a new proof of Euler's result that primes 
                   $p = 4n+3$ do not divide numbers of the form $a^2+1$. \\
\hline
\end{tabular} 
\end{table} 

\begin{table}[h!]
\begin{tabular}{|r|p{2.5cm}|p{8.5cm}|} \hline 
   \# & letter & content \\ \hline 
 74 & Oct. 15, 1743 &  Euler claims that if a number is a sum of 
                    two (three, four) rational squares, then it is a sum of 
                    two (three, four) integral squares. \\
 87 & Feb. 16, 1745 & Euler shows that numbers represented in  
                    two different ways as a sum of two squares must 
                    be composite. \\ 
114 & April 15, 1747 & Goldbach is skeptical about some of Fermat's 
                   claims, i.e. that every number is a sum of three 
                   triangular numbers, or that every integer $8n+3$ 
                   is the sum of three squares. \\
115 & May 6, 1747 &  Euler proves the Two-Squares Theorem except for
                   the following lemma: there exist integers $a$, $b$ 
                   such that $a^n - b^n$ is not 
                   divisible by the prime $4n+1$.  \\
125 & Feb. 13, 1748 & Euler writes that the proof of the Three-Squares
                   theorem ought to resemble his proof for two squares. 
                   Euler mentions ``Fermat's Last Theorem''. \\
126 & April 6, 1748 & Goldbach observes that if $2n+1$ is a sum of 
                   three squares, then $2n+3$, $4n+3$, $4n+6$ and
                   $6n+3$ are sums of four squares. \\
127 & May 4, 1748 & Euler states the product formula for sums of 
                   four squares. He also suggests proving theorems such as 
                   the Four-Squares Theorem using generating functions. \\
138 & April 12, 1749 & Euler closes the gap in his proof n\higho\, 115. 
                    He can prove the Four-Squares Theorem except for the 
                    lemma: If $ab$ and $b$ are sums of four 
                    squares, then so is $a$. \\
140 & July 16, 1749 & Goldbach knows how to prove the following special 
                    case of Euler's missing lemma: if $8m+4$ is a sum 
                    of four odd squares, then $2m+1$ is a sum of four 
                    squares. \\ 
141 & July 26, 1749 & Euler observes that the Four-Squares Theorem
                    follows if it can be shown to hold for all
                    numbers of the form $n = 8n+1$ (or, more generally,
                    for all numbers of any of the forms $8n + a$
                    with $a = 1$, $3$, $5$ or $7$.  

                    Euler also proves special cases of the 
                    ``missing link'' in his proof of the Four-Squares 
                    Theorem: if $pA$ is a sum of four squares
                    and $p = 2$, $3$, $5$, $7$, then so is $A$. He also 
                    formulates a general lemma that brings him within 
                    inches of a full proof. \\
144 & June 9, 1750 & Euler laments the fact that he can prove that 
                    every natural number is the sum of four rational 
                    squares, but that he cannot do it for integers. \\
147 & Aug. 17, 1750 & Euler returns to his idea of using generating
                    functions for proving the Four-Squares Theorem. \\
169 & Aug. 4, 1753 & Euler mentions ``another very beautiful theorem''
                    in Fermat's work: ``Fermat's Last Theorem''. He 
                    remarks that he has found a proof for exponent $3$. \\
 \hline
\end{tabular} \smallskip
\caption{Fermat's Theorems in the Euler-Goldbach Correspondence}
\end{table}

In this article we describe Euler's efforts at proving the
Four-Squares Theorem. As we will see, using the lemma which
Euler ``almost'' proved in his letter no. 141 it is an easy
exercise to complete the proof. In order to see how natural
Euler's approach is, we will first discuss a proof of the 
Two-Squares Theorem based on the same principles. The first 
published proof of the Four-Squares Theorem is due to 
Lagrange \cite{Lagr}; immediately afterwards, Euler \cite{E445}
simplified Lagrange's version.

There are perhaps no better examples in Goldbach's correspondence 
with Euler  for illuminating his role as a catalyst than the letters 
discussing various aspects of the Four-Squares Theorem.

In his letter [EG126; April 6, 1748] to Euler, Goldbach 
writes\footnote{The excerpts from the correspondence Euler--Goldbach 
are all taken from \cite{EuGo}; the translation into English is due to
Martin Mattm\"uller.}
\begin{quote}
If you can prove, as you think you can, that all numbers $8m+3$ can be
brought to the form $2a^{2}+b^{2}$ if they are prime you will also 
easily find that all prime numbers $4m+3$ belong to the  formula 
$2a^{2}+b^{2}+c^{2}$, since in my opinion this comprises all odd 
numbers; but if this were proved just for all prime numbers, it 
should be obvious that all positive numbers consist of four squares.
\end{quote}
Goldbach thus thought that once Euler could prove that every prime
$p = 8n+3$ has the form $p = 2a^2 + b^2$, he should also be able 
to prove\footnote{In his reply, Euler remarks that he is unable to 
deduce the second claim from the first: 
\begin{quote}
If the proposition that $8m+3$ equals $2a^{2}+b^{2}$ whenever $8m+3$ 
is a prime number is true, I do not see that $4n+3$ must always equal 
$2a^{2}+b^{2}+c^{2}$ whenever $4n+3$ is a prime number.
\end{quote}}  
the claim that every prime $4m+3$ can be written in the 
form $2a^2 + b^2 + c^2$. The claim that every odd number $2m+1$ is 
represented by the ternary quadratic form
$2a^2+b^2+c^2$ is equivalent 
to $4m+2 = 4a^2 + 2b^2 + 2c^2 = (2a)^2 + (b-c)^2 + (b+c)^2$,
hence follows from a special case of the Three-Squares Theorem.

Goldbach also observes that if $2n+1 = 2a^2+b^2+c^2$, then 
e.g. $3(2n+1) = 6n+3 = (a+b+c)^2 + (a+b-c)^2 + (2a-b)^2 + c^2$ is 
a sum of four squares. In his reply [EG127; May 4, 1748], Euler 
shows that Goldbach's observations are special cases of the following
product formula: if $m = a^2 + b^2 + c^2 + d^2$ and 
$n = x^2 + y^2 + z^2 + v^2$, then $mn = f^2 + g^2 + h^2 + k ^2$ 
for\footnote{Euler's notation and choice of signs differ from 
the formulas given here.}
      \begin{equation}\label{E4sq}
         \left\{\begin{array}{rclrcl}
              f & = & ax + by + cz + dv,  &  g & = & bx - ay - dz + cv, \\
              h & = & cx + dy - az - bv,  &  k & = & dx - cy + bz - av.
                 \end{array} \right.
       \end{equation}
Actually, Euler had known the formula at least since 1740, as his
notebooks (see Pieper \cite{Pieper}) show.

A year later, on April 12, 1749, Euler returns to the problem of 
Four Squares and remarks
\begin{quote}
I can almost prove that any number is a sum of four or fewer 
squares; indeed, what I am lacking is just one proposition, 
which does not appear to present any difficulty at first sight.
\end{quote}
In fact, Euler [EG138] announces a plan for proving the theorem:
he introduces the symbol $\fbox{$4$}$ for denoting sums of 
four (or fewer) squares, and then states:
\begin{enumerate}
\item If $a{}=\fbox{$4$}$ and $b=\fbox{$4$}$, then also $ab=\fbox{$4$}$.
\item If $ab=\fbox{$4$}$ and $a=\fbox{$4$}$, then also $b=\fbox{$4$}$. 
\item Corollary: \ldots if $ab=\fbox{$4$}$ and $a\neq \fbox{$4 $}$ \ldots, 
      then also $b\neq \fbox{$4$}$.
\item If all prime numbers were of the form $\fbox{$4$}$, then every 
      number at all should be contained in the form $\fbox{$4$}${}.
\item An arbitrary prime number $p$ being proposed, there always is 
      some number of the form $a^{2}+b^{2}+c^{2}+d^{2}$ which is 
      divisible by $p$, while none of the numbers $a$, $b$, $c$, $d$ 
      themselves is divisible by $p$.
\item If $a^{2}+b^{2}+c^{2}+d^{2}$ is divisible by $p$, then, however 
      large the numbers $a$, $b$, $c$, $d$ may be, it is always possible 
      to exhibit a similar form $x^{2}+y^{2}+z^{2}+v^{2}$ divisible by 
      $p$ in such a way that the single numbers $x$, $y$, $z$, $v$ are 
      no greater than half the number $p$.
\item If $p$ is a prime number and therefore odd, the single numbers $x$, 
      $y$, $z$, $v$ will be smaller than $\tfrac{1}{2}p$, so 
      $x^2+y^2+z^2+v^2 < 4 \cdot \frac14\,p^2 = p^2.$
\item If $p$ is any prime number, it will certainly be the sum of four or
      fewer squares.
\end{enumerate}
Euler remarks that (2) ``is the theorem on which the whole matter 
depends, and which I cannot yet prove''. The other claims are proved 
by him except for the fifth; here Euler writes
``The proof of this is particularly remarkable, but somewhat cumbersome; 
  if you like, it can make up the contents of an entire letter in the 
future''. A modern proof (actually it goes back to Minding \cite{Minding}) 
of a statement slightly weaker than 5 goes like this: the quadratic 
polynomials $-x^2$ and $1+y^2$ each attain $\frac{p+1}2$ distinct 
values modulo $p$, hence there must exist $x, y$ with 
$1+y^2 \equiv - x^2 \bmod p$, and then $p \mid x^2 + y^2 + 1$. 

The last claim is proved by descent: if there is a counterexample $p$, 
the previous propositions allow Euler to find a prime  $q < p$ which 
cannot be written as a sum of four squares: contradiction!

In [EG140; June 16, 1749], Goldbach takes up a special case of
Euler's missing lemma and writes
\begin{quote}
On the other hand, I think the proof of this proposition is within
my power: If any number is the sum of four odd squares, the same number is
also the sum of four even squares, or: four odd squares equal to $8m+4$
being given, there are also four squares for the number $2m+1$.
\end{quote}

In his reply [EG141; July 26, 1749], Euler proves this remark
as follows: 
\begin{quote}
Let $ 8m+4 = (2a+1)^2 + (2b+1)^2 + (2c+1)^2+( 2d+1)^2; $
then, on dividing by $2$, since 
$\frac{(2p+1)^2+(2q+1)^2}{2} = (p+q+1)^2+(p-q)^2$, 
$$ 4m+2 = (a+b+1)^2 + (a-b)^2 + (c+d+1)^2 + (c-d)^2, $$
so $4m+2=4\,\square $. Since, however, $4m+2$ is an oddly even number, 
two of these four squares must be even and two odd\footnote{If an odd
number of squares is odd, then the sum of squares is odd; thus there
must be $0$, $2$ or $4$ even squares. In the first and the third case,
the sum is divisible by $4$.}. 
So one will have
$$ 4m+2 = (2p+1)^2 + (2q+1)^2 + 4r^2 + 4s^2, $$
therefore
$$ 2m+1 = (p+q+1)^2+(p-q)^2+(r+s)^2+(r-s)^2, $$
and consequently 
$$ 8m+4 = 4(p+q+1)^2 + 4(p-q)^2 + 4(r+s)^2 + 4(r-s)^2, $$
QED.
\end{quote}

In slightly modernized form, we can formulate the essence of Euler's 
result as follows:

\begin{lem}
If $2n = a^2 + b^2 + c^2 + d^2$ is a sum of four squares, 
then so is $n$.
\end{lem}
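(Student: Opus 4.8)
The plan is to carry out Euler's halving argument from [EG141] in full generality; the only new ingredient needed beyond his letter is a parity bookkeeping that replaces his specific odd/even split. Everything rests on the elementary identity
\[
\frac{x^2+y^2}{2} = \Big(\frac{x+y}{2}\Big)^2 + \Big(\frac{x-y}{2}\Big)^2,
\]
which is exactly Euler's $\frac{(2p+1)^2+(2q+1)^2}{2} = (p+q+1)^2+(p-q)^2$ freed from the assumption that $x,y$ are odd: it expresses half of a sum of two squares again as a sum of two squares, and it yields \emph{integers} precisely when $x$ and $y$ have the same parity, so that $x\pm y$ is even.

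First I would record the parity constraint forced by the hypothesis. Since $a^2+b^2+c^2+d^2=2n$ is even and a square is odd exactly when its base is odd, the number of odd terms among $a,b,c,d$ is even, hence $0$, $2$, or $4$. This is the same observation Euler makes in his footnote when he notes that two of the four squares must be even and two odd.

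Next I would pair the four bases into two pairs of equal parity. If all four are even or all four are odd, any pairing does; if exactly two are odd, I group the two odd bases together and the two even bases together. In every case each pair consists of integers of matching parity, so the identity applies to each pair with integer outputs. Relabelling so that $(a,b)$ and $(c,d)$ are these pairs and dividing the hypothesis by $2$ then gives
\[
n = \frac{a^2+b^2}{2}+\frac{c^2+d^2}{2}
  = \Big(\frac{a+b}{2}\Big)^2+\Big(\frac{a-b}{2}\Big)^2+\Big(\frac{c+d}{2}\Big)^2+\Big(\frac{c-d}{2}\Big)^2,
\]
which exhibits $n$ as a sum of four integral squares.

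The argument involves no genuinely hard step; the one point deserving care — and the place where the proof could go astray if handled carelessly — is the pairing, where one must verify that a same-parity partition into two pairs really does exist for every admissible parity pattern. This is exactly what the evenness of the count of odd terms guarantees. Once that combinatorial point is secured, the passage to half-size squares is the purely mechanical identity above, and no further case analysis is required.
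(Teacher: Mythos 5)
Your proof is correct and takes essentially the same approach as the paper: pair the four entries by parity and apply the identity $\frac{x^2+y^2}{2}=\bigl(\frac{x+y}{2}\bigr)^2+\bigl(\frac{x-y}{2}\bigr)^2$ to each pair. The only difference is one of detail --- the paper simply asserts that one can permute $a,b,c,d$ so that $a-b$ and $c-d$ are even, while you explicitly justify this with the count of odd entries being $0$, $2$, or $4$.
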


\begin{proof}
We can permute $a$, $b$, $c$ and $d$ in such a way that 
$a - b$ and $c - d$ are even. But then 
$$ n = \Big(\frac{a+b}2\Big)^2 + \Big(\frac{a-b}2\Big)^2
       +  \Big(\frac{c+d}2\Big)^2 + \Big(\frac{c-d}2\Big)^2, $$
and we are done.
\end{proof}

Goldbach's remark and the simplicity of the proof lead Euler to
the realization that he could go further; in the same letter, 
Euler treats the analogous 

\begin{lem}
If $3n = F^2 + G^2 + H^2 + K^2$ is a sum of four squares, 
then so is $n$.
\end{lem}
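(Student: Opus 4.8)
The plan is to imitate the proof of Lemma~1, replacing the role of the prime~$2$ by~$3$ and exploiting the product formula \eqref{E4sq} together with the decomposition $3 = 1^2 + 1^2 + 1^2 + 0^2$. Concretely, I would multiply the relation $3n = F^2 + G^2 + H^2 + K^2$ by $3$ and feed it into \eqref{E4sq} with $(a,b,c,d) = (F,G,H,K)$ and $(x,y,z,v) = (1,1,1,0)$. This produces $9n = f^2 + g^2 + h^2 + k^2$ with
$$ f = F+G+H, \quad g = G-F-K, \quad h = H+K-F, \quad k = G-H+K. $$
If each of $f,g,h,k$ can be made divisible by $3$, then dividing by $9$ yields
$$ n = \Big(\frac{f}{3}\Big)^2 + \Big(\frac{g}{3}\Big)^2 + \Big(\frac{h}{3}\Big)^2 + \Big(\frac{k}{3}\Big)^2, $$
which is exactly the desired conclusion.

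So the whole argument reduces to a residue computation modulo $3$. Since every square is $\equiv 0$ or $1 \bmod 3$ and $F^2 + G^2 + H^2 + K^2 = 3n \equiv 0 \bmod 3$, the number of $F,G,H,K$ that are $\not\equiv 0 \bmod 3$ must be either $0$ or $3$. Replacing any variable by its negative does not change its square, and permuting the four variables is harmless, so I may assume that either all four of $F,G,H,K$ are divisible by $3$, or that $F \equiv G \equiv H \equiv 1$ and $K \equiv 0 \bmod 3$. In the first case every one of $f,g,h,k$ is manifestly divisible by $3$; in the second case a direct check gives $f \equiv 1+1+1 \equiv 0$, $g \equiv 1-1-0 \equiv 0$, $h \equiv 1+0-1 \equiv 0$, and $k \equiv 1-1+0 \equiv 0 \bmod 3$. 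In either situation the required divisibility holds and the computation above completes the proof.

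The only real content is this last piece of residue bookkeeping, and I expect it to be the crux: it is the precise analogue of the step in Lemma~1 where one arranges $a-b$ and $c-d$ to be even, and it is exactly where the special shape $3 = 1^2 + 1^2 + 1^2 + 0^2$ is exploited. Everything else is the mechanical substitution \eqref{E4sq} followed by a division by~$9$, which presents no difficulty. It is worth flagging that the same strategy extends to the further small primes (such as $5$ and $7$) that Euler treats, but that the sign-and-permutation analysis grows more delicate as the prime grows; a uniform argument valid for every prime is precisely the general lemma~(2) that Euler was unable to establish.
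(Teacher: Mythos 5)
Your proof is correct. The formulas for $f,g,h,k$ are exactly what \eqref{E4sq} gives with $(a,b,c,d)=(F,G,H,K)$ and $(x,y,z,v)=(1,1,1,0)$; the mod-$3$ count (zero or three of $F,G,H,K$ prime to $3$) is right; and the normalization by permutations and sign changes is legitimate, since neither the hypothesis nor the conclusion is affected by it. Your route, however, is organized differently from the paper's, in an instructive way. The paper (faithfully following Euler's letter) divides $3n$ by $3$ directly: writing $F=1+3a$, $G=1+3b$, $H=1+3c$, $K=3d$ in the nontrivial case, it obtains $n = 1+2a+2b+2c+3a^2+3b^2+3c^2+3d^2$ and then verifies, with no indication of its origin, the identity $n = (1+a+b+c)^2+(a-b+d)^2+(a-c-d)^2+(b-c+d)^2$; the case where all of $F,G,H,K$ are divisible by $3$ is handled separately via the product formula applied to $n = 3(r^2+s^2+t^2+u^2)$. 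You instead multiply $3n$ by $3 = 1^2+1^2+1^2+0^2$ and divide by $9$, which treats both cases uniformly and, pleasantly, explains where Euler's mystery identity comes from: your $f/3,\, g/3,\, h/3,\, k/3$ are precisely, up to sign, the four squares in the paper's identity (e.g.\ $g/3 = b-a-d = -(a-b+d)$). Your mechanism --- multiply by a suitable sum of four squares, then strip off a square factor after a residue check --- is also the one underlying Euler's general postscript theorem and the induction proof in Section 2 of the paper, so your argument foreshadows the paper's main point better than the ad hoc verification does. Your closing caveat is also accurate: the residue bookkeeping is what breaks down for larger primes (this is exactly where Euler got stuck at $p=7$), and replacing it by something uniform is the content of the general lemma and the induction of Section 2.
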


\begin{proof}
We can write $F = f + 3r$, $G = g + 3s$, $H = h + 3t$ and $K = k + 3u$.
Up to permutation and choices of signs, there are the following cases:
\begin{enumerate}
\item $f = g = h = k = 0$. Then $n = 3(r^2+s^2+t^2+u^2)$, and the
      product formula yields the claim.
\item $f = g = h = 1$, $k = 0$. Then
      \begin{align*}
       n & = 1 + 2a + 2b + 2c + 3a^2 + 3b^2 + 3c^2 + 3d^2 \\
         & = (1+a+b+c)^2 + (a-b+d)^2 + (a-c-d)^2 + (b-c+d)^2.
      \end{align*}  
\end{enumerate}
This completes the proof.
\end{proof}

Euler treats the case $p = 5$ in a similar way, but gets stuck
with $p = 7$ (he does not see how to write the expression
$$ A = 2 + 2a + 4b + 6c + 7a^2 + 7b^2 + 7c^2 + 7d^2 $$ 
resulting from $(f,g,h,k) = (0,1,2,3)$ as a sum of four squares). 

Euler returns to the case $p = 7$ in the postscript of his letter:

\begin{quote}
PS. The theorem for $7A=\fbox{$4$}$, which I did not fully
execute, is completed by the following general theorem:
\end{quote}

\begin{thm}
Setting $m=a^2+b^2+c^2+d^2$, if $mA=\fbox{$4$}$,
then also $A=\fbox{$4$}$.
\end{thm}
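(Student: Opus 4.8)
The plan is to interpret sums of four squares as norms of integral quaternions and to prove the statement by dividing one quaternion by another. Writing $\alpha = a + bi + cj + dk$ and regarding the hypothesis $mA=\fbox{$4$}$ as $N(\beta)=mA$ for a quaternion $\beta$ with integer coordinates, the product formula \eqref{E4sq} is exactly the multiplicativity $N(\xi\eta)=N(\xi)N(\eta)$ of the quaternion norm. Since $N(\alpha)=m$, the rational quaternion $\gamma=\beta\bar\alpha/m=\beta\alpha^{-1}$ already satisfies $N(\gamma)=N(\beta)/N(\alpha)=A$; the whole difficulty is to replace $\gamma$ by an \emph{integral} quaternion of the same norm, for then $A$ is exhibited as a sum of four squares.

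First I would pass from the Lipschitz order $\Z\langle i,j,k\rangle$ to the Hurwitz order $H$, obtained by adjoining $\tfrac12(1+i+j+k)$, because $H$ is norm-Euclidean and hence a ring in which right ideals are principal and greatest common right divisors exist. Using the Euclidean algorithm I would factor $\alpha=\pi_1\cdots\pi_r$ into Hurwitz primes, so that $N(\pi_1),\dots,N(\pi_r)$ are precisely the rational prime factors $p_1,\dots,p_r$ of $m$. The core step is then to peel these off $\beta$ one at a time: since each $p_t$ divides the norm of the running quaternion, a greatest-common-right-divisor computation yields a Hurwitz prime of norm $p_t$ that right-divides it, lowering its norm by the factor $p_t$. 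After removing all $r$ of them the norm has dropped from $mA$ down to $A$, leaving a Hurwitz quaternion $\gamma$ with $N(\gamma)=A$.

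Finally, every Hurwitz quaternion can be multiplied by one of the $24$ units of $H$ into the Lipschitz order, so $\gamma$ may be taken to have integer coordinates; then $A=N(\gamma)$ is a genuine sum of four integer squares, which is the assertion. This procedure specializes to Euler's two worked cases $m=2$ and $m=3$, and it disposes of the value $m=7$ at which he got stuck, since there one need only remove the single Hurwitz prime of norm $7$.

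The step I expect to be the main obstacle is this peeling, which is where non-commutativity bites. At each stage one must know that a Hurwitz prime of norm $p_t$, and not merely a unit, actually right-divides the running quaternion; this requires the local analysis of how $p_t$ behaves in $H$ (whether it splits, ramifies, or divides the quaternion outright, by comparing a prime of norm $p_t$ with its conjugate) together with a check that the successive one-sided divisions are compatible, so that the norm comes down by exactly $m$ and not by some proper divisor. A more elementary route, closer to Euler's Lemmas 1 and 2, is also available: reduce the coordinates of $\beta$ modulo $m$ to a quaternion $\phi$ with $N(\phi)=\sum c_t^2=mm'$ and $0\le m'<m$, apply \eqref{E4sq} to $\beta\bar\phi$ to clear the factor $m$, and then induct on the multiplier; but the delicate point there is identical, namely to show that the reduced multiplier $m'$ re-enters the induction correctly so that the descent closes.
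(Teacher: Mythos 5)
Your proposal is correct in outline, but it takes a genuinely different route from the paper. The paper reproduces Euler's own argument, which rests on the unexplained identity (\ref{Ecrux}), and then closes the gap in its second section by a strong induction on $m$: the identity itself, recast as the matrix relation (\ref{ErelE}), is carried along as part of the induction hypothesis, and the descent step is simply reduction of $F,G,H,K$ modulo $m$ followed by the product formula. You instead build up the multiplicative arithmetic of the Hurwitz order --- Euclidean division, principal one-sided ideals, greatest common right divisors, factorization into elements of prime norm, and unit migration back to the Lipschitz order. Both arguments are quaternionic at heart: the paper's matrices $\cM[a,b,c,d]$ are Lipschitz quaternions in disguise, and its closing Remark attributes precisely your route to Hurwitz \cite{Hurw}. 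The trade-off is clear: the paper's induction is elementary and self-contained, using only tools Euler demonstrably possessed (the product formula (\ref{E4sq}) and reduction mod $m$), which is the article's historical point; your route needs substantially more theory before the peeling step can even be stated, but in exchange it explains structurally where (\ref{Ecrux}) comes from (left divisibility of $\beta$ by a quaternion of norm $m$) and delivers the factorization theory of the Hurwitz order as a by-product.

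Two details in your sketch deserve attention, though neither is fatal. First, the gcrd argument produces a prime of norm $p_t$ only when the running quaternion is \emph{not} divisible by $p_t$ in $H$: if the gcrd were a unit, writing $1 = \beta\mu + p_t\nu$ and multiplying on the left by $\bar\beta$ forces $p_t \mid \bar\beta$, a contradiction. When the running quaternion does lie in $p_t H$, the gcrd degenerates to $p_t$ itself, of norm $p_t^2$, and you must obtain a norm-$p_t$ prime from elsewhere --- from a suitable factor of $\alpha$, or from the solvability of $1+x^2+y^2 \equiv 0 \pmod{p_t}$ (a lemma the paper also needs, in step 1 of its induction) --- and then split the rational factor as $p_t = \pi\bar\pi$ so that the total norm removed is exactly $m$ and not some other divisor. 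Second, in your alternative elementary route, least-absolute-remainder reduction gives $0 \le m' \le m$ rather than $m' < m$: the boundary case $m' = m$, where all four residues equal $m/2$, has to be treated separately, exactly as the paper does in step 3 of its proof.
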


\begin{proof} 
Let
$$ mA = (f+mp)^2 + (g+mq)^2 + (h+mr)^2 + (k+ms)^{2} $$
and 
\begin{equation}\label{Ecrux}
   f^2+g^2+h^2+k^2 = (a^2+b^2+c^2+d^2)(x^2+y^2+z^2+v^2); 
\end{equation}
then 
\begin{align*}
f & = ax+by+cz+dv  & g & = bx-ay-dz+cv \\
h & = cx+dy-az-bv  & k & = dx-cy+bz-av,
\end{align*}
and one gets 
$$ A = x^2+y^2+z^2+v^2+2(fp+gq+hr+ks) + m(p^2+q^2+r^2+s^2); $$
but from this one finds 
\begin{align*}
A & = (ap+bq+cr+ds+x)^2 + (aq-bp+cs-dr-y)^2 \\
  & \ \ + (ar-bs-cp+dq-z)^2 + (as+br-cq-dp-v)^2,
\end{align*}
so $A=\fbox{$4$}${} in whole numbers, QED.
\end{proof}

This looks exactly like the missing lemma in Euler's plan for proving
the Four-Squares Theorem. On the other hand, Euler later repeatedly
said that he did not have a proof of this lemma, and eventually
congratulated Lagrange on his proof of the theorem. So something
must be missing. In fact it is not clear where (\ref{Ecrux}) comes 
from. For small $m$, this identity can be checked by hand, which 
is what Euler did for $m = 2, 3, 5$ and $7$. What Euler failed to 
see at this point is that a rather simple induction proof now 
completes the proof of the Four-Squares Theorem.

\section{The Proof of the Four-Squares Theorem \`a la Euler}

In this section we will show that it is not difficult 
to complete the proof of the Four Squares Theorem by induction
using the formulas contained in Euler's letter n\higho\, 141. 
Instead of faithfully reproducing this proof here, we will use
linear algebra to abbreviate calculations. 
To this end, we consider the matrices
$$ \cM[a,b,c,d] = \left(\begin{matrix}
                    a &  b &  c &  d  \\
                   -b &  a &  d & -c  \\
                   -c & -d &  a &  b  \\
                   -d &  c & -b &  a \end{matrix} \right). $$

\begin{lem}
The product formula can be written in the form
$$ \cM[a,b,c,d]^* \cM[f,g,h,k] = \cM[r,s,t,u] $$
where  $A^*$ denotes the transpose of $A$, and where
\begin{align*}
      r & =  af + bg + ch + dk & s & = ag - bf + ck - dh \\
      t & =  ah - bk - cf + dg & t & = ak + bh - cg - df.
\end{align*}
In particular, $A^* A = m\cI$ for $A = \cM[a,b,c,d]$, where 
$m = a^2+b^2+c^2+d^2$ and $\cI$ is the $4 \times 4$-identity matrix.  
\end{lem}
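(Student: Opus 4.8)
The plan is to reduce the stated matrix identity to the four-squares product formula that Euler already gave (equation \eqref{E4sq}), which the paper has established. The key observation is that the matrix $\cM[a,b,c,d]$ is precisely the matrix-of-coefficients encoding of the Euler product formula, so the claimed identity $\cM[a,b,c,d]^* \cM[f,g,h,k] = \cM[r,s,t,u]$ should amount to a bookkeeping statement: multiplying these $4\times 4$ matrices reproduces exactly the four-squares composition law. First I would compute the $(1,1)$ entry of the product $\cM[a,b,c,d]^* \cM[f,g,h,k]$ and verify it equals $af+bg+ch+dk = r$; then I would read off the remaining entries of the first column (and by the structure of $\cM$, enough entries elsewhere) to confirm they match $s$, $t$, $u$ as claimed.

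**The main structural point** is that $\cM$ is not an arbitrary $4\times 4$ matrix but has a rigid sign pattern, so that $\cM[r,s,t,u]$ is completely determined by its first row (or first column). Hence I would not verify all sixteen entries independently; instead I would show that the product $\cM[a,b,c,d]^*\cM[f,g,h,k]$ again has the $\cM$-pattern — i.e. it is of the form $\cM[r,s,t,u]$ for some $r,s,t,u$ — and then identify those four parameters. To see that the product has the correct form, I would note that the matrices $\cM[a,b,c,d]$ are (up to the transpose convention) exactly the left-regular representation of the Hamilton quaternions restricted to a standard basis; the set of such matrices is closed under the relevant multiplication, which is the conceptual reason the $\cM$-pattern is preserved. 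The entries $r,s,t,u$ are then the components of the quaternion product, which is the product formula.

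**The "in particular" claim** that $A^*A = m\cI$ follows immediately by specializing $(f,g,h,k) = (a,b,c,d)$: then $r = a^2+b^2+c^2+d^2 = m$, while the skew structure forces $s = t = u = 0$ (each is a sum of the form $ab-ba+\dots$ that cancels in pairs), so $\cM[m,0,0,0] = m\cI$. I would present this as a one-line corollary once the general identity is in hand.

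**The main obstacle** is not conceptual but notational: one must fix, once and for all, the sign and transpose conventions so that the entries actually come out as the $r,s,t,u$ listed (note in passing the apparent typo in the statement, where the fourth coefficient is labelled $t$ rather than $u$). The computation is a routine but error-prone $4\times 4$ multiplication, and the real work is checking that the chosen conventions in $\cM$ match Euler's chosen signs in \eqref{E4sq}; any sign discrepancy would flip an entry and break the identity. I expect the cleanest exposition to verify the first column by direct multiplication and then invoke the $\cM$-pattern (equivalently, the quaternion interpretation) to fill in the rest rather than grinding through all sixteen entries by hand.
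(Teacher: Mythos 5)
Your argument is correct and complete in outline: direct verification of the first row (or column) of $\cM[a,b,c,d]^*\,\cM[f,g,h,k]$, combined with the observation that the set of matrices of the $\cM$-pattern is closed under transposition and multiplication (via the quaternion regular representation), does prove the identity, and your specialization $(f,g,h,k)=(a,b,c,d)$ correctly yields $A^*A = m\cI$. The paper itself gives no proof of this lemma --- it is stated as a routine verification, with the quaternionic explanation deferred to the closing remark that the matrices $\cM[r,s,t,u]$ form a ring isomorphic to the Lipschitz quaternions --- so your route is essentially the intended one, and your observation that the fourth coefficient in the statement should be labelled $u$ rather than $t$ is a genuine typo in the paper.
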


We would like to prove the following theorem by induction on $m$:

\begin{thm}
Every positive integer $m$ is a sum of four squares. 

Moreover, if $mA = F^2 + G^2 + H^2 + K^2$ for integers $F, G, H, K$,
then there exist integers  $a, b, c, d$ and $x, y, z, v$ such that

\begin{equation}\label{ErelE}
    \begin{cases}
             m  = a^2 + b^2 + c^2 + d^2,  \quad A = x^2 + y^2 + z^2 + v^2,
       \quad \text{and} \\
  \cM[F,G,H,K] = \cM[a,b,c,d]^* \cM[x,y,z,v]. \end{cases}
\end{equation}
\end{thm}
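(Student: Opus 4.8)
The plan is to prove both assertions simultaneously by strong induction on $m$, after first recasting the statement in the language of quaternions. The product formula of the preceding lemma shows that the matrices $\cM[a,b,c,d]$ are closed under multiplication and transposition and realize the arithmetic of the Lipschitz quaternions $\mathcal L=\Z+\Z\mathbf i+\Z\mathbf j+\Z\mathbf k$: under the identification $\cM[a,b,c,d]\leftrightarrow\alpha=a+b\mathbf i+c\mathbf j+d\mathbf k$, transposition corresponds to conjugation $\alpha\mapsto\ov\alpha=a-b\mathbf i-c\mathbf j-d\mathbf k$, multiplication of the $\cM$'s is (up to order) quaternion multiplication, and $\cM[\alpha]^*\cM[\alpha]=N(\alpha)\,\cI$ encodes the norm $N(\alpha)=a^2+b^2+c^2+d^2$. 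In these terms the relation (\ref{ErelE}) says precisely that the quaternion $\Phi=F+G\mathbf i+H\mathbf j+K\mathbf k$ factors as $\Phi=\xi\,\ov\alpha$ with $N(\alpha)=m$ and $N(\xi)=A$; that is, $\Phi$ has a \emph{right} divisor of norm $m$. Thus the theorem amounts to: every positive integer is a norm from $\mathcal L$, and whenever $m\mid N(\Phi)$ the quaternion $\Phi$ has a right divisor of norm $m$. The base case $m=1$ is immediate, with $\alpha=1$ and $\xi=\Phi$.

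For composite $m=m_1m_2$ with $1<m_i<m$, the inductive hypothesis gives that $m_1$ and $m_2$ are sums of four squares, so by the product formula $m$ is as well; and the factorization (\ref{ErelE}) follows by applying the hypothesis for $m_1$ to $\Phi$ (with cofactor $m_2A$) to split off a right divisor of norm $m_1$, then applying the hypothesis for $m_2$ to the remaining left factor, and multiplying the two divisors. This reduces everything to $m=p$ prime. To see that $p$ itself is a sum of four squares, I would invoke Euler's fifth claim (the Minding argument quoted above) to produce $\Phi_0$ with $p\mid N(\Phi_0)$ and, after reducing its entries modulo $p$, with $N(\Phi_0)=pA_0$ and $1\le A_0<p$. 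Since $A_0<p$, the inductive hypothesis for $A_0$ applies to $\Phi_0$ and exhibits a factor of norm $p$; hence $p$ is a sum of four squares.

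The heart of the matter is the descent proving (\ref{ErelE}) for $m=p$, and this is where I expect the main obstacle. Given $\Phi$ with $N(\Phi)=pA$, reduce each entry modulo $p$ to write $\cM[F,G,H,K]=\cM[f,g,h,k]+p\,\cM[q_1,q_2,q_3,q_4]$ with $\phi$ (the quaternion of coordinates $f,g,h,k$) having entries of absolute value $<p/2$; then $N(\phi)\equiv N(\Phi)\equiv 0\pmod p$ and $N(\phi)<p^2$, so $N(\phi)=pB$ with $0\le B<p$. If $B=0$ then $p\mid\Phi$ and the norm-$p$ quaternion produced above splits off directly. If $1\le B<p$, the inductive hypothesis for $B$ applies; the subtle point is \emph{orientation}: the theorem demands a right divisor of norm $p$, whereas applying the hypothesis to $\phi$ naturally yields a norm-$p$ factor on the wrong side, and the resulting $x,y,z,v$ need not be integers. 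The remedy is to apply the hypothesis not to $\phi$ but to its conjugate $\ov\phi$ (equivalently to the transpose $\cM[f,g,h,k]^*$), which has the same norm $pB$: this produces a norm-$p$ right divisor $\xi''$ of $\phi$, say $\phi=\omega\,\xi''$. Now integrality falls out, since $\xi''\,\ov{\xi''}=N(\xi'')\,\cI=p\,\cI$ collapses one term: $\Phi\,\ov{\xi''}=\phi\,\ov{\xi''}+p(\cdots)=p\,\omega+p(\cdots)$ is divisible by $p$, whence $\Phi=\big(\Phi\,\ov{\xi''}/p\big)\,\xi''$ is the desired factorization, with $\xi''$ of norm $p$ on the right and a complementary factor of norm $A$ on the left. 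Translating back through the dictionary recovers integers $a,b,c,d$ and $x,y,z,v$ satisfying (\ref{ErelE}), completing the induction.
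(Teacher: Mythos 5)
Your proposal is sound in its essentials, and its engine is the same as the paper's: reduce the entries of $\Phi$ modulo the modulus, factor the small remainder quaternion by the inductive hypothesis, and use $\xi''\,\ov{\xi''}=p\,\cI$ to collapse the cross term so that the cofactor $\Phi\,\ov{\xi''}/p$ is automatically integral --- this is precisely the paper's computation $\cM[F,G,H,K]=\cM[a,b,c,d]^*\bigl(\cM[x,y,z,v]+\cM[a,b,c,d]\cM[r,s,t,u]\bigr)$ in quaternion dress. The architecture around that engine is genuinely different, though. The paper runs the descent uniformly for every $m$ and, to show that $m$ itself is a sum of four squares, splits off square factors and invokes the Chinese Remainder Theorem for squarefree $m$; you instead dispose of composite $m=m_1m_2$ by two applications of the inductive hypothesis (split off a right divisor of norm $m_1$, then one of norm $m_2$ from the cofactor, and multiply), so that both the Minding congruence and the descent are needed only at primes, and the CRT step disappears entirely. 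A further point in your favor: you identify explicitly, and repair by conjugation, the \emph{orientation} problem --- the hypothesis applied to $\phi$ itself delivers the norm-$p$ factor on the wrong side of the product. The paper passes over this in silence, both when it says (\ref{ErelE}) for $A<m$ ``follows from the induction assumption by switching the roles of $m$ and $A$'' and in its treatment of $A\ge m$; your device of applying the hypothesis to $\ov\phi$ and transposing is exactly what is needed to make those appeals to induction literally correct.

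There is, however, one genuine gap: $p=2$. You claim the entries of $\phi$ can be chosen of absolute value $<p/2$, whence $N(\phi)<p^2$ and $B<p$. For $p=2$ this is impossible unless $F,G,H,K$ are all even: an odd entry must reduce to $1=p/2$, and if all four entries are odd then $N(\phi)=4=p^2$, i.e.\ $B=p$, a case your dichotomy ($B=0$ or $1\le B<p$) never reaches. Nor can you dodge it, since your composite-case reduction routes every even $m$ through the prime $2$. (The paper meets the same boundary case because its residues satisfy $-m/2<f\le m/2$; it disposes of $B=m$ separately with ``then $f^2=g^2=h^2=k^2$ and our claim holds.'') The repair is short: if all four entries of $\Phi$ are odd, then $\phi=1+\mathbf i+\mathbf j+\mathbf k=(1+\mathbf i)(1+\mathbf j)$, so taking $\xi''=1+\mathbf j$ one gets $\phi\,\ov{\xi''}=(1+\mathbf i)(1+\mathbf j)(1-\mathbf j)=2(1+\mathbf i)$, and the same collapsing computation yields $\Phi=\bigl(\Phi\,\ov{\xi''}/2\bigr)\xi''$ with the norm-$2$ divisor $\xi''$ on the right; alternatively, handle $m=2$ once and for all by pairing the entries of $\Phi$ by parity, as in Euler's own Lemma 1. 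With that one case supplied, your induction closes.
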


\begin{proof}
The theorem holds for $m = 1$ and $a=1$, $b = c = d = 0$, $x = F$, \ldots,
$v = K$.

We will now prove the following steps:
\begin{enumerate}
\item[1.] $m$ is a sum of four squares.
\item[2.] (\ref{ErelE}) holds for all $A < m$: this follows from the induction
      assumption by switching the roles of $m$ and $A$.
\item[3.] (\ref{ErelE}) holds for all $A \ge m$: this is Euler's
      part of the proof.
\end{enumerate}
Ad 1. Assume that the Theorem holds for all natural numbers $< m$. 
   If $m$ is not squarefree, say $m = m_1n^2$ for $n > 1$, then $m_1$ 
   is a sum of four squares by induction assumption, hence so is $m$.

   If $m$ is squarefree, we solve the congruence $f^2 + g^2 \equiv -1 \bmod p$ 
   for every prime $p \mid m$ and use the Chinese Remainder Theorem to find 
   integers $A, F, G$ such that $mA = F^2 + G^2 + 1$. Reducing $F$ and $G$ 
   modulo $m$ in such a way that the squares of the remainders are minimal 
   shows that we may assume that $A < m$. The induction assumption (we have 
   to switch the roles of $m$ and $A$) shos that (\ref{ErelE}) holds.

Ad 3. Write $mA = F^2 + G^2 + H^2 + K^2$, and define integers
$-\frac m2 < f, g, h, k \le \frac m2$ using the Euclidean algorithm: 
$F = f + mr$, $G = g + ms$, $H = h + mt$ and $K = k + mu$. Then we have 
$\cM[F,G,H,K] = \cM[f,g,h,k] + m \cM[r,s,t,u]$. Now
$f^2 + g^2 + h^2 + k^2 \le m^2$ is divisible by $m$, say $= mB$
for some number $B \le m$. If $B = m$, then $f^2 = g^2 = h^2 = k^2$
and our claim holds; if $B < m$, then the induction assumption 
guarantees the existence of integers $x, y, z, v$ with 
$\cM[f,g,h,k] = \cM[a,b,c,d]^*\cM[x,y,z,v]$. Using
$m\cI = \cM[a,b,c,d,]^*\cM[a,b,c,d,]$ we now find
\begin{align*}
  \cM[F,G,H,K] & = \cM[f,g,h,k] + m\cM[r,s,t,u] \\
            & = \cM[a,b,c,d]^*\cM[x,y,z,v]
                + \cM[a,b,c,d,]^*\cM[x,y,z,v] \cM[r,s,t,u] \\
            & = \cM[a,b,c,d]^*(\cM[x,y,z,v] + \cM[a,b,c,d,] \cM[r,s,t,u]) \\
            & = \cM[a,b,c,d]^* \cM[X,Y,Z,V] 
\end{align*}
with $\cM[X,Y,Z,V] = \cM[x,y,z,v] + \cM[r,s,t,u]\cM[a,b,c,d,]$, i.e.
\begin{align*}
   X & = x + ar + bs + ct + du, & 
   Y & = y - as + br - cu + dt, \\
   Z & = z - at + bu + cr - ds, & 
   V & = v - au - bt + cs + dr.
\end{align*}
\begin{align*}
 \text{From } \qquad 
 mA\cI & = \cM[F,G,H,K]^*\cM[F,G,H,K] \\
       & = \cM[X,Y,Z,V]^*\cM[a,b,c,d] \cM[a,b,c,d]^* \cM[X,Y,Z,V]   \\
       & = m(X^2+Y^2+Z^2+V^2 
\end{align*}
we deduce that $X^2+Y^2+Z^2+V^2 = A$.  
\end{proof}

\medskip\noindent{\bf Remark.}
The matrices $\cM[r,s,t,u]$ form a ring isomorphic to the Lipschitz 
quaternions. The proof of the Four-Squares Theorem due to Lagrange 
and Euler was first translated into the language of quaternions by 
Hurwitz \cite{Hurw}.

\section*{Acknowledgement}
I thank Martin Mattm\"uller for the crucial observation
that Euler's postscript might be sufficient for proving the
Four-Squares Theorem, as well as for his help in translating 
from Latin. I also thank Norbert Schappacher for his valuable 
comments.

\end{document}